\documentclass[a4paper]{article}

\usepackage[english]{babel}
\usepackage[utf8]{inputenc}

\usepackage{amsmath,amssymb,amsthm}

\theoremstyle{definition}
\newtheorem{dfn}{Definition}
\theoremstyle{plain}
\newtheorem{lem}[dfn]{Lemma}
\newtheorem{prop}[dfn]{Proposition}
\newtheorem{col}[dfn]{Corollary}
\theoremstyle{remark}
\newtheorem*{rem}{Remark}

\newcommand\er{\mathbb{R}}
\newcommand\ce{\mathbb{C}}
\newcommand\V{\mathsf{V}}
\newcommand\G{\mathsf{G}}
\newcommand\Tan{\mathcal{T}}
\newcommand\Vecf{\mathcal{X}}
\newcommand\Df{\Omega}
\newcommand\Sf{\Sigma}
\newcommand\Sdf{\Sf\Df}
\newcommand\PSdf{{\Sdf}_0}
\newcommand\Og{\mathsf{O}}
\newcommand\so{\mathfrak{so}}
\newcommand\Cl{\mathsf{Cl}}
\newcommand\Spin{\mathsf{Spin}}
\newcommand\Spnr{\mathsf{S}}
\newcommand\Sb{\mathcal{S}}
\newcommand\Af{\mathsf{A}}
\newcommand\Sa{\Spnr\Af}
\newcommand\PSa{{\Sa}_0}
\newcommand\Alt{\textstyle\bigwedge}
\newcommand\sla{\mathfrak{sl}}
\newcommand\M{\mathcal{M}}
\newcommand\Pb{\mathcal{P}}
\newcommand\eps{\varepsilon}
\newcommand\phpm{\varphi_\pm}
\newcommand\ph{\varphi}
\newcommand\ii{\mathsf{i}}
\newcommand\extp{\wedge}
\newcommand\intp{\mathbin\lrcorner}
\newcommand\clp{\cdot}
\newcommand\clf{{\gamma\clp}}
\newcommand\cle{\clf\extp}
\newcommand\dclf{{\dual\gamma\!\!\clp}}
\newcommand\dcli{\dclf\intp}
\newcommand\conedclf{{\dual{\cone{\gamma}}\!\!\clp}}
\newcommand\conedcli{\conedclf\intp}
\newcommand\dual[1]{{#1}^*}
\newcommand\pb[1]{{#1}^*}
\newcommand\cone[1]{\overline{#1}}
\newcommand\covd{\nabla}
\newcommand\dif{\mathrm{d}}
\newcommand\dr{\dif r}
\newcommand\vr{\partial_r}
\DeclareMathOperator\Twst{T}
\DeclareMathOperator\Dir{D}
\DeclareMathOperator\prj{p}
\DeclareMathOperator\sym{sym}
\DeclareMathOperator\Ker{Ker}
\DeclareMathOperator*\tsum{\textstyle\sum}


\title{Killing spinor-valued forms and the cone construction}

\author{Petr Somberg, Petr Zima}
\date{}
\begin{document}

\maketitle

\begin{abstract}
On a pseudo-Riemannian manifold $\M$ we introduce 
a system of partial differential Killing type equations 
for spinor-valued differential forms, and study
their basic properties. We discuss the relationship 
between solutions of Killing equations on $\M$ and
parallel fields on the metric cone over
$\M$ for spinor-valued forms.
\end{abstract}

\section{Introduction}
\label{sec:intro}

The subject of the present article are the systems of over-determined 
partial differential equations for spinor-valued
differential forms, classified as a type of Killing equations. 
The solution spaces of these systems of PDE's
are termed Killing spinor-valued differential forms. A central 
question in geometry asks for pseudo-Riemannian manifolds admitting 
non-trivial solutions of Killing type equations, namely how the 
properties of Killing spinor-valued forms relate to the underlying 
geometric structure for which they can occur.

Killing spinor-valued forms are closely related to Killing spinors and
Killing forms with Killing vectors as a special example.  Killing
spinors are both twistor spinors and eigenspinors for the Dirac
operator, and real Killing spinors realize the limit case in the
eigenvalue estimates for the Dirac operator on compact Riemannian spin
manifolds of positive scalar curvature. There is a classification of
complete simply connected Riemannian manifolds equipped with real
Killing spinors, leading to the construction of manifolds with the
exceptional holonomy groups $\G_2$ and $\Spin(7)$, see
\cite{friedrich1990}, \cite{bar1993}.  Killing vector fields on a
pseudo-Riemannian manifold are the infinitesimal generators of
isometries, hence they influence its geometrical properties. In
particular, on compact manifolds of negative Ricci curvature there are
no non-trivial Killing vector fields, while on manifolds of
non-positive Ricci curvature are all Killing vector fields parallel. A
generalization of Killing vector fields are Killing forms,
characterized by the fact that their covariant derivative is totally
skew-symmetric tensor field.

There is rather convenient tool allowing to describe invariant 
systems of partial differential equations. It is based on 
Stein-Weiss gradient operators constructed by decomposing 
the covariant derivative into individual invariant components, 
see \cite{stein1968}.  
Among prominent examples of Stein-Weiss gradients are the twistor-like 
operators, which are overdetermined operators corresponding to the 
highest weight gradient component.  It can be shown that the
solution spaces of Killing type equations are always in the kernel of 
corresponding twistor operators.  On the other hand, equations  
given by vanishing of twistor operators are equivalent to weaker systems 
called conformal Killing equations.

The main result of the present article establishes
a correspondence between special solutions of 
Killing equations for spinor-valued forms on the 
base pseudo-Riemannian manifold $\M$ and certain parallel spinor-valued 
forms on the metric cone $\cone{\M}$ over $\M$.  Since
the existence of parallel spinor-valued forms can be 
reduced to a holonomy problem, one can at least partially 
classify the manifolds admitting
these special solutions by enumerating all possible holonomy groups.
In particular, for compact irreducible Riemannian manifolds one can
exploit the Berger-Simmons classification, cf.\ \cite{berger1955} and
\cite{simons1962}.  This was already done by Bär in \cite{bar1993} for
real Killing spinors and by Semmelmann in \cite{semmelmann2003} for
ordinary Killing forms.

Let us briefly describe the content of our article. After general
introduction in Section \ref{sec:prelim}, we employ in Section
\ref{sec:decomp} the representation theory of $\Spin(n_+,n_-)$ and
produce invariant decompositions which offer a deeper insight into
subsequent treatment of spinor-valued forms.  We show that the space
of spinor-valued forms is highly reducible and as a special case we
discuss the primitive spinor-valued forms.  We also introduce the so
called generalized twistor modules as distinguished components in the
decomposition of tensor products with the dual of the fundamental vector
representation.  Then we give rather straightforward definition of
Killing equations on spinor-valued $p$-form fields in Section
\ref{sec:killing} and prove a basic property characterizing its
relationship to other types of Killing equations.  In Section
\ref{sec:cone}, we introduce the metric cone over the base pseudo-Riemannian
manifold $\M$, and discuss the lifts of both spinor-valued form fields
and Killing equations on $\M$ to its metric cone $\cone{\M}$. The
conclusion is that for any degree $p$, there is an injection from
special Killing spinor-valued $p$-forms on $\M$ to parallel
spinor-valued $(p+1)$-forms on $\cone{\M}$ for the Levi-Civita
connection on the metric cone. The variance $\epsilon$ in the signature
$(n_+,n_-)$ for $\M$ and $(\overline{n}_+,\overline{n}_-)$ for
$\cone{\M}$ is built into the definition of Killing number of special
Killing spinor-valued forms on $\M$, and at the same time appears in
the formulas for the connection on the metric cone $\cone{\M}$.

\label{sec:prelim}

\section{Spinor-valued forms}
\label{sec:decomp}
\paragraph{Spinors and form representations.}

The spinor-valued forms originate in the tensor product of forms
(i.e., the alternating tensors) and spinors.  We recall the Clifford
algebra $\Cl(n_+,n_-)$, constructed from the $n$-dimensional
pseudo-Euclidean space $\V = \er^{n_+,n_-} = (\er^n, g)$ equipped with
the standard symmetric bilinear form $g$ of signature $(n_+,n_-)$.
The complex spinor space $\Spnr$ arises as an irreducible complex
$\Cl(n_+,n_-)$-module.  We denote by $\Af^p = \Alt^p\dual\V$ the space
of ordinary forms of degree $p \in \{0,\dots,n\}$, namely $\dual\V =
\Af^1$ denotes the dual of $\V$.  The space of spinor-valued forms of
degree $p$ is defined as the tensor product $\Sa^p = \Af^p \otimes
\Spnr$.

Since $\V$ is naturally embedded into $\Cl(n_+,n_-)$ as its subspace
of generators, the module structure of $\Spnr$ is realized by the
Clifford multiplication `$\clp$' of spinors by vectors.  Equivalently,
the multiplication can be viewed as a $\Cl(n_+,n_-)$-valued $1$-form
denoted $\clf$.  A convenient way to write the defining relations of
the Clifford algebra $\Cl(n_+,n_-)$ is
\begin{align}
\label{eq:clfdef}
  \sym (\clf \otimes \clf) &= -2g,
\end{align}
where $\sym$ denotes the symmetrization over form indices.  
To complete our notation, we
recall the usual exterior product `$\extp$' of two forms, the
interior product `$\intp$' of a vector and a form and finally the
orthogonal dual `$\dual{}$' mapping vectors to $1$-forms
via the isomorphism induced by $g$ .

It is straightforward to verify a few basic relations useful in 
the computations with spinor-valued forms:
\begin{align}
\label{eq:clf}
\begin{aligned}
  X \clp (\cle \Phi) + \cle (X \clp \Phi)
    &= -2\dual X \extp \Phi,
\\
  X \clp (\dcli \Phi) + \dcli (X \clp \Phi)
    &= -2X \intp \Phi,
\\
  X \intp (\cle \Phi) + \cle (X \intp \Phi)
    &= X \clp \Phi,
\\
  \dual{X} \extp (\dcli \Phi) + \dcli (\dual{X} \extp \Phi)
    &= X \clp \Phi,
\end{aligned}
\end{align}
for all $\Phi \in \Sa^p$ and $X \in \V$.  Note that $\clf$ in the
formulas acts simultaneously on the spinor part by the Clifford
multiplication and on the form part by the exterior or interior
product, respectively.


\paragraph{$\Spin$-invariant decompositions.}

Let us briefly recall the case of ordinary forms as discussed in, cf.\
\cite{semmelmann2003}.  The space $\dual\V \otimes \Af^p$ decomposes
with respect to the orthogonal group $\Og(n_+,n_-)$ as
\begin{align}
  \label{eq:derfdec}
  \dual\V \otimes \Af^p
    &\cong \Af^{p-1} \oplus \Af^{p+1} \oplus \Af^{p,1}.
\end{align}
For any $\alpha \in \Af^p$ and $X \in \V$, the projections on the
first two components are given simply by the interior and exterior
products,
\begin{align}
\label{eq:derfprj}
  \prj_1 (\dual{X} \otimes \alpha)
    &= X \intp \alpha,
  &\prj_2 (\dual{X} \otimes \alpha)
    &= \dual{X} \extp \alpha.
\end{align}
Consequently, the remaining component called the 
\emph{twistor module} for $\Af^p$ is the common 
kernel of $\prj_1, \prj_2$
\begin{align}
  \Af^{p,1} &= \Ker (\prj_1) \cap \Ker (\prj_2).
\end{align}

The situation is more complicated for the spinor-valued forms $\Sa^p$.  
Firstly, $\Sa^p$ is reducible with respect to the spin
group $\Spin(n_+,n_-)$.  Its decomposition can be obtained using the
technique of the Howe dual pairs, for details see \cite{slupinski1996}.
The algebraic operators
\begin{align}
  X &= \cle,
  &Y &= -\dcli,
  &H &= [X, Y],
\end{align}
commute with the action of $\Spin(n_+,n_-)$ and span a Lie algebra
isomorphic to the Lie algebra $\sla(2)$.  In particular, we have
\begin{align}
\label{eq:clfcommut}
  H (\Phi) &= (n - 2p)\, \Phi \quad\text{for all}\quad \Phi \in \Sa^p,
\end{align}
and further analysis shows that $\Sa^p$ decomposes as\footnote{
  More precisely, the decomposition is irreducible only for $n$ odd.  
	For $n$ even, each of the summands further decomposes into two irreducible
  components analogously to the decomposition of the spinor space into
  half-spinors $\Spnr = \Spnr^+ \oplus \Spnr^-$.
}
\begin{align}
\label{eq:sfdec}
  \Sa^p &\cong \PSa^0 \oplus \dots \oplus \PSa^l,\quad l = \min \{p,n-p\} .
\end{align}
The component defined as the kernel of $Y$,
\begin{align}
\label{eq:primsf}
  \PSa^q &= \{ \Phi \in \Sa^q \,|\, \dcli \Phi = 0 \}
\end{align}
for $q \in \{0,\dots,[n/2]\}$ is called the space of 
primitive spinor-valued forms.

In order to decompose the space $\dual\V \otimes \Sa^p$, we first
consider projections analogous to \eqref{eq:derfprj} and in addition 
one given by the Clifford multiplication,
\begin{align}
  \prj_1 (\dual{X} \otimes \Phi)
    &= X \intp \Phi,
  &\prj_2 (\dual{X} \otimes \Phi)
    &= \dual{X} \extp \Phi,
  &\prj_3 (\dual{X} \otimes \Phi)
    &= X \clp \Phi,
\end{align}
for $\Phi \in \Sa^p$ and $X \in \V$.  In the case $p=0$ the decomposition degenerates:
\begin{align}
\label{eq:dersdec}
  \dual\V \otimes \Spnr &= \Sa^1 \cong \Spnr \oplus \PSa^1.
\end{align}
The (classical) \emph{twistor module} for $\Spnr$ is just the kernel
of Clifford multiplication,
\begin{align}
  \PSa^1 &= \Ker (\prj_3),
\end{align}
the first two projections being trivial.  The same applies also to the
case $p=n$, $\Sa^n \cong \Sa^0 = \Spnr$.

If $p \in \{1,\dots,n-1\}$, the \emph{twistor module} for $\Sa^p$ is
once again defined as the common kernel of all projections,
\begin{align}
  \Sa^{p,1} &= \Ker (\prj_1) \cap \Ker (\prj_2) \cap \Ker (\prj_3).
\end{align}
However, it turns out that the three projections are not independent,
in fact, the decomposition looks like
\begin{align}
\label{eq:dersfdec}
  \dual\V \otimes \Sa^p
    &\cong (\Sa^{p-1} \oplus \Sa^{p+1} \oplus \Sa^p) \,/\, \Spnr
      \oplus \Sa^{p,1}.
\end{align}
In other words, the first three components share just two copies of
the spinor space $\PSa^0 = \Spnr$.  Hence we need to modify the
projections to make them independent and such a modification is
rather complicated.  Moreover, due to the reducibility of $\Sa^p$ there are
multiplicities in the full decomposition of\/ $\dual\V \otimes \Sa^p$
to irreducible summands and the choice of said modification is not
unique. However, the multiplicities disappear in the restriction 
to the subspace of primitive spinor-valued forms and all 
projections are essentially unique.  For more details and explicit
formulas, see \cite{zima2014}.


\section{Killing equations}
\label{sec:killing}

We assume $(\M,g)$ is an oriented and spin pseudo-Riemannian manifold
of dimension $n$ and signature $(n_+,n_-)$, and $\covd$ is the
Levi-Civita covariant derivative.  As usual we denote the tangent
bundle by $\Tan(\M)$ and the Lie algebra of smooth vector fields by
$\Vecf(\M)$.  We shall consider tensor fields on $\M$ given by smooth
sections of vector bundles associated to a class of
$\Spin(n_+,n_-)$-representations discussed in the previous Section
\ref{sec:decomp}.

\paragraph{Killing forms.}

Killing vector fields can be characterized as the vector fields, 
whose flow preserves the metric $g$.  In terms of the Levi-Civita 
covariant derivative, a Killing vector field $K$ fulfills
\begin{align}
  g (\covd_X K, Y) + g (\covd_Y K, X) &= 0
  \quad \text{for all}\quad X,Y \in \Tan(\M) .
\end{align}
The skew-symmetry of the covariant derivative of $K$ generalizes to the definition of
Killing form as a differential form $\alpha$ fulfilling
(cf.,\ \cite{yano1952}, \cite{semmelmann2003})
\begin{align}
  \label{eq:kf}
  \covd_X \alpha &= \tfrac{1}{p+1}\, X \intp \dif \alpha
\end{align}
for all $X \in \Tan(\M)$, where $p$ is the degree of $\alpha$ and $\dif
\alpha$ denotes the usual exterior differential of $\alpha$.  By the
polarization identity, \eqref{eq:kf} is equivalent to
\begin{align}
  \label{eq:kfpolar}
  X \intp \covd_X \alpha &= 0,
\end{align}
and this implies that the Killing forms provide quadratic first integrals of
the geodesic equation, cf.\ \cite{walker1970}: for $\alpha$ a Killing 
form and $X$ the geodesic vector field on $\M$,
\begin{align}
  \covd_X (X \intp \alpha) &= 0.
\end{align}
By \eqref{eq:derfdec}, the covariant derivative can be decomposed on 
three invariant first-order operators: codifferential
$\dual\dif\colon \Df^p(\M) \to \Df^{p-1}(\M)$
given by $\prj_1 \circ \covd$,
exterior differential
$\dif\colon \Df^p(\M) \to \Df^{p+1}(\M)$
given by $\prj_2 \circ \covd$,
and the twistor operator
$\Twst\colon \Df^p(\M) \to \Df^{p,1}(\M)$
given by projecting $\covd$ on the twistor module.
Here $\Df^p(\M)$ denotes the space of differential forms of degree $p$
and $\Df^{p,1}(\M)$ the space of tensor fields corresponding to the
representation $\Af^{p,1}$. As for Killing forms, \eqref{eq:kf} is 
equivalent to
\begin{align}
  \Twst \alpha &= 0,
  \quad \text{and} \quad
  \dual\dif \alpha = 0.
\end{align}
In particular, Killing forms are in the kernel of the twistor operator, i.e., they
are conformal Killing forms. For more detailed discussion, see \cite{semmelmann2003}.


\paragraph{Killing spinors.}

We denote by $\Pb_\Spin(\M)$ a chosen spin structure on $\M$ and
$\Sb(\M)$ the associated spinor bundle.  The Levi-Civita connection
uniquely lifts to a spin connection and by abuse of notation we denote
the induced covariant derivative on spinors and tensor-spinor fields
$\covd$ as well. $\Sf(\M)$ denotes the space of spinor fields and
$\PSdf^1(\M)$ the space of primitive spinor-valued differential
$1$-forms corresponding to the representation $\PSa^1$.

A Killing spinor is a spinor field
$\Psi$ such that
\begin{align}
\label{eq:ks}
  \covd_X \Psi &= a X \clp \Psi,\quad X \in \Tan(\M),
\end{align}
where $a \in \ce$ is called the Killing number of
$\Psi$. Killing spinors are also intimately related to the 
underlying geometry of $\M$, c.f.\ \cite{friedrich1980}, \cite{baum1991}.

The algebraic decomposition \eqref{eq:dersdec} yields two invariant first-order
differential operators:
Dirac operator
$\Dir\colon \Sf(\M) \to \Sf(\M)$
given by $\prj_3 \circ \covd$, and 
twistor operator
$\Twst\colon \Sf(\M) \to \PSdf^1(\M)$
given by projecting $\covd$ on the twistor module.
The Killing equation
\eqref{eq:ks} is equivalent to
\begin{align}
  \Twst \Psi &= 0,
  \quad \text{and} \quad
  \Dir \Psi = -na \Psi.
\end{align}
In particular, Killing spinors are in the kernel of the twistor operator, 
i.e., they are conformal Killing (or, twistor) spinors.


\paragraph{Killing spinor-valued forms.}

The $\Cl(n_+,n_-)$-valued $1$-form $\clf$, see \eqref{eq:clfdef}, is
invariant for the action of $\Spin(n_+,n_-)$ and hence globally
defined on $\M$. Since the Levi-Civita connection is metric, so is the
spin connection and subsequently we also have $\covd (\clf) = 0$.

\begin{dfn}
A Killing spinor-valued form is a spinor-valued differential form
$\Phi$ of degree $p \in \{1,\dots,n-1\}$ such that
\begin{align}
\label{eq:ksf}
  \covd_X \Phi
    &= a \Big( X \clp \Phi
        - \tfrac{1}{p+1}\, X \intp (\clf \extp \Phi) \Big)
      + \tfrac{1}{p+1}\, X \intp \dif \Phi,\quad X \in \Tan(\M),
\end{align}
where $\dif \Phi$ is the covariant exterior differential 
of\/ $\Phi$ and $a \in \ce$ is called the Killing number of
$\Phi$.
\end{dfn}

The equation \eqref{eq:ksf} first appeared in theoretical physics in
the context of Kaluza-Klein supergravity, cf.\ \cite{duff1983},
\cite{duff1986}.  In geometry, the equation was introduced first in a
simplified form corresponding to $a=0$ in \cite{somberg2011} and in
its general form in \cite{zima2014}.  As in the case of differential
forms, we can reformulate \eqref{eq:ksf} using the polarization
identity:
\begin{align}
\label{eq:ksfpolar}
  X \intp \covd_X \Phi &= aX \intp (X \clp \Phi),
\end{align}
hence Killing spinor-valued forms also yield invariants along
the geodesics of $\M$, but only in the case $a=0$.
A consequence of \eqref{eq:ksfpolar} is
\begin{prop}
Let $\Phi$ be a spinor-valued Killing form with Killing number $a=0$
and let $X$ be the geodesic vector field on~$\M$.  Then
\begin{align}
  \covd_X (X \intp \Phi) &= 0,
\end{align}
i.e., $X \intp \Phi$ is covariantly constant along the geodesics.
\end{prop}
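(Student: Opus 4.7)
The plan is very direct, essentially just unpacking two ingredients already on the table: the Leibniz rule for the interior product with respect to the Levi-Civita connection, and the polarization identity \eqref{eq:ksfpolar} specialized to $a=0$.

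First, since $\covd$ is a derivation compatible with the tensorial operation $\intp$, I would write
\begin{align*}
  \covd_X (X \intp \Phi) &= (\covd_X X) \intp \Phi + X \intp \covd_X \Phi.
\end{align*}
The first term vanishes by the very definition of a geodesic vector field, $\covd_X X = 0$. For the second term, the polarization identity \eqref{eq:ksfpolar} gives $X \intp \covd_X \Phi = a X \intp (X \clp \Phi)$, and the hypothesis $a = 0$ makes this term vanish as well. Combining the two yields the claim.

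The only tiny subtlety worth addressing is the meaning of ``$X$ is the geodesic vector field on $\M$'', which I would read as the statement that $X$ satisfies $\covd_X X = 0$ along its integral curves; no further assumption about $X$ being globally defined or Killing is needed, because the identity \eqref{eq:ksfpolar} holds pointwise for every tangent vector. Thus the argument is pointwise, applied along any geodesic parametrized by its velocity $X$.

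There is essentially no obstacle here; the statement is a corollary of \eqref{eq:ksfpolar} in the same way that the classical fact $\covd_X(X \intp \alpha) = 0$ for Killing forms is a corollary of \eqref{eq:kfpolar}. If anything deserves emphasis, it is the asymmetry with the case of ordinary Killing forms: for general $a \neq 0$ the right-hand side $aX \intp(X \clp \Phi)$ need not vanish, so spinor-valued Killing forms produce geodesic first integrals only in the degenerate Killing number case.
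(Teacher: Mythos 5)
Your argument is correct and coincides with the paper's own (implicit) reasoning: the proposition is stated there precisely as a consequence of the polarization identity \eqref{eq:ksfpolar}, combined with the Leibniz rule and $\covd_X X = 0$ for the geodesic vector field. Your closing remark about the failure of the first-integral property for $a \neq 0$ matches the paper's comment as well.
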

The Killing spinor-valued forms can be directly constructed out of 
Killing forms and Killing spinors.

\begin{prop}
Let $\alpha$ be a Killing form of degree $p \in \{1,\dots,n-1\}$ and
$\Psi$ a Killing spinor with Killing number $a$.  Then $\Phi = \alpha
\otimes \Psi$ is a Killing spinor-valued form with Killing
number $a$.
\end{prop}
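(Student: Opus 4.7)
The plan is to verify equation \eqref{eq:ksf} directly for $\Phi = \alpha \otimes \Psi$ by differentiating the tensor product and substituting the Killing form equation \eqref{eq:kf} and the Killing spinor equation \eqref{eq:ks}. Concretely, I first apply the Leibniz rule to the Levi-Civita covariant derivative:
\begin{align*}
\covd_X \Phi = (\covd_X\alpha) \otimes \Psi + \alpha \otimes (\covd_X \Psi) = \tfrac{1}{p+1}(X \intp \dif\alpha) \otimes \Psi + a\,\alpha \otimes (X \clp \Psi).
\end{align*}
This serves as the target expression to be matched by the right-hand side of \eqref{eq:ksf}.

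Next, I compute the covariant exterior differential $\dif\Phi$ in a local frame $\{e_i\}$ with dual coframe $\{e^i\}$ using $\dif\Phi = \sum_i e^i \extp \covd_{e_i}\Phi$. The Leibniz rule combined with \eqref{eq:ks} for the spinor factor gives
\begin{align*}
\dif\Phi = \dif\alpha \otimes \Psi + a\,\clf\extp\Phi,
\end{align*}
since $\sum_i (e^i \extp \alpha) \otimes (e_i \clp \Psi)$ is by definition the simultaneous action $\clf \extp \Phi$ on the simple tensor $\Phi = \alpha \otimes \Psi$.

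Finally, I substitute this expression for $\dif\Phi$ into the right-hand side of \eqref{eq:ksf}. The contribution $\tfrac{a}{p+1}\,X \intp (\clf\extp\Phi)$ produced by the $a\,\clf\extp\Phi$ summand of $\dif\Phi$ cancels the explicit correction term $-\tfrac{a}{p+1}\,X \intp (\clf\extp\Phi)$ built into \eqref{eq:ksf}. Using the identity $X \clp \Phi = \alpha \otimes (X \clp \Psi)$, what remains is exactly the Leibniz expression already computed, which completes the verification.

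The computation is essentially mechanical and no single step is a genuine obstacle; the main conceptual content runs in the opposite direction. The Clifford correction term inside \eqref{eq:ksf} is engineered precisely to absorb the extra $a\,\clf\extp\Phi$ contribution that the Killing spinor equation forces into $\dif\Phi$, so the proposition in effect exhibits the compatibility that motivates the shape of the Killing spinor-valued form equation.
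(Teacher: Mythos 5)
Your proposal is correct and follows essentially the same route as the paper: compute $\dif\Phi = \dif\alpha \otimes \Psi + a\,\cle\Phi$ from the frame formula and the Killing spinor equation, then combine with the Leibniz rule and \eqref{eq:kf}, \eqref{eq:ks} so that the Clifford correction term in \eqref{eq:ksf} cancels the $a\,\cle\Phi$ contribution. The only difference is cosmetic — you verify the right-hand side of \eqref{eq:ksf} reduces to the Leibniz expression, whereas the paper rewrites the Leibniz expression into the form of \eqref{eq:ksf}.
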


\begin{proof}
Let $\{X_1, \dots, X_n\}$ be an orthonormal frame.  We first compute the
exterior covariant derivative of $\Phi$ using \eqref{eq:ks},
\begin{align*}
\begin{split}
  \dif \Phi
    &= \tsum_{i=1}^n \dual{X}_i \extp \covd_{X_i} \Phi
    = \tsum_{i=1}^n \dual{X}_i \extp (
        \covd_{X_i} \alpha \otimes \Psi
        + \alpha \otimes \covd_{X_i} \Psi) =
\\
    &= \dif\alpha \otimes \Psi + a \cle \Phi.
\end{split}
\end{align*}
Now by \eqref{eq:kf} and one more time \eqref{eq:ks}, we get
\begin{align*}
\begin{split}
  \covd_X \Phi
    &= \covd_X \alpha \otimes \Psi + \alpha \otimes \covd_X \Psi
    = \tfrac{1}{p+1}\, X \intp (\dif \alpha \otimes \Psi)
      + a X \clp \Phi =
\\
    &= a \Big(X \clp \Phi
          - \tfrac{1}{p+1}\, X \intp (\cle \Phi) \Big)
      + \tfrac{1}{p+1}\, X \intp \dif \Phi.
\qedhere
\end{split}
\end{align*}
\end{proof}

We conclude this section with a description of the spinor-valued Killing
forms in terms of $\Spin(n_+,n_-)$-invariant first-order operators:
\begin{center}
\renewcommand\arraystretch{1.5}
\begin{tabular}{lll}
  \emph{codifferential}
  & $\dual\dif\colon \Sdf^p(\M) \to \Sdf^{p-1}(\M)$
  & given by $\prj_1 \circ \covd$,
\\[1ex]
  \parbox[m]{8em}{
    \emph{covariant exterior \\ differential}
  }
  & $\dif\colon \Sdf^p(\M) \to \Sdf^{p+1}(\M)$
  & given by $\prj_2 \circ \covd$,
\\[2ex]
  \parbox[m]{8em}{
    \emph{twisted Dirac \\ operator}
   }
  & $\Dir\colon \Sdf^p(\M) \to \Sdf^p(\M)$
  & given by $\prj_3 \circ \covd$,
\\[1ex]
  \emph{Twistor operator}
  & $\Twst\colon \Sdf^p(\M) \to \Sdf^{p,1}(\M)$
  & \parbox[m]{10em}{
    given by projecting $\covd$ \\ on the twistor module .
  }
\end{tabular}
\end{center}
Here $\Sdf^p(\M)$ denotes the space of spinor-valued differential
forms of degree $p$ and $\Sdf^{p,1}(\M)$ the space of tensor-spinor
fields corresponding to the representation $\Sa^{p,1}$.  The equation
\eqref{eq:ksf} is then equivalent to the system of three differential 
equations
\begin{align}
\begin{gathered}
  \Twst \Phi = 0,
  \quad
  \dual\dif \Phi = a \dcli \Phi,
\\
  \text{and} \quad
  \Dir \Phi = \tfrac{1}{p+1} \Big( {-ap(n+2) \Phi}
        -\cle \dual\dif \Phi + \dcli \dif \Phi \Big).
\end{gathered}
\end{align}
In particular, we have
\begin{prop}
Killing spinor-valued forms are in the kernel of the twistor operator,
i.e., they are a special case of conformal Killing spinor-valued
forms.
\end{prop}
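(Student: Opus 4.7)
The plan is to use the algebraic characterization of the twistor module $\Sa^{p,1}$ as the common kernel of the three $\Spin(n_+,n_-)$-equivariant projections $\prj_1, \prj_2, \prj_3$ appearing in \eqref{eq:dersfdec}, and to show pointwise that $\covd \Phi$ lies in the complementary ``non-twistor'' summand $(\Sa^{p-1} \oplus \Sa^{p+1} \oplus \Sa^p)/\Spnr$ of $\dual\V \otimes \Sa^p$. First I would rewrite the Killing equation \eqref{eq:ksf} by collecting the $X$-dependence:
\begin{equation*}
  \covd_X \Phi
    \;=\; X \clp (a\Phi)
    \;+\; X \intp \Xi,
  \qquad
  \Xi \;=\; \tfrac{1}{p+1}\bigl( \dif \Phi - a\,\clf \extp \Phi \bigr)
    \;\in\; \Sdf^{p+1}(\M).
\end{equation*}
Viewed pointwise in $\dual\V \otimes \Sa^p$, this expresses $\covd \Phi$ as a sum of an ``$X\clp$-type'' term built from $a\Phi \in \Sa^p$ and an ``$X\intp$-type'' term built from $\Xi \in \Sa^{p+1}$; no ``$\dual X \extp$-type'' term is present.

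Next I would invoke the $\Spin(n_+,n_-)$-equivariance of the natural assignments
\begin{equation*}
  \chi \in \Sa^p \;\longmapsto\; (X \mapsto X \clp \chi),
  \qquad
  \beta \in \Sa^{p+1} \;\longmapsto\; (X \mapsto X \intp \beta),
\end{equation*}
whose images both lie in the non-twistor summand of $\dual\V \otimes \Sa^p$. This is because the non-twistor summand is precisely the subrepresentation spanned by the three multiplication-type elements $\dual X \extp \alpha$, $X \intp \beta$, $X \clp \chi$, being by \eqref{eq:dersfdec} the complement of the common kernel $\Sa^{p,1}$ of $\prj_1, \prj_2, \prj_3$. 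Combining this with the rewriting above, $\covd \Phi$ takes values in the non-twistor sub-bundle, so the projection $\Twst \Phi$ onto $\Sa^{p,1}$ vanishes identically.

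The main subtlety I anticipate is the $\Spnr$-multiplicity in \eqref{eq:dersfdec}, which makes the three sections from $\Sa^{p-1}, \Sa^{p+1}, \Sa^p$ back into $\dual\V \otimes \Sa^p$ non-canonical. Fortunately this does not affect the argument: we only need the combined image of the three multiplication-type assignments to lie in the non-twistor summand, which follows directly from the definition of $\Sa^{p,1}$ as the common kernel of $\prj_1, \prj_2, \prj_3$ without any need to disentangle the spinor multiplicities.
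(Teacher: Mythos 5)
Your proposal is correct and follows essentially the same route as the paper: the paper records that the Killing equation \eqref{eq:ksf} is equivalent to a system of invariant equations whose first member is $\Twst \Phi = 0$ (with the computations deferred to \cite{zima2014}), and your argument is exactly the relevant direction of that reformulation, namely that the rewritten right-hand side $X \clp (a\Phi) + X \intp \Xi$ exhibits $\covd\Phi$ as lying pointwise in the non-twistor summand of \eqref{eq:dersfdec}, so its projection to $\Sa^{p,1}$ vanishes. Your handling of the $\Spnr$-multiplicity issue is also consistent with the paper's remarks, since only the containment of the combined image in the complement of $\Sa^{p,1}$ is needed, not a canonical splitting.
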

For detailed computations and further discussion, see \cite{zima2014}.


\section{The cone construction}
\label{sec:cone}

\paragraph{Metric cone.}

The $\eps$-metric cone over pseudo-Riemannian manifold $(\M,g)$ is the
warped product $(\cone{\M} = \M \times \er_+, \cone{g} = r^2 g +
\eps\, \dr^2)$, where $r$ is the coordinate function on $\er_+$ and
$\eps=\pm 1$.  Note that the signature of $\cone{g}$ is
$(\cone{n}_+,\cone{n}_-)$ with
\begin{align}
  \cone{n}_+ &= n_+ + (1+\eps)/2
  \quad \text{and} \quad \cone{n}_- = n_- + (1-\eps)/2 .  
\end{align}
The canonical projections $\prj_1\colon \cone{\M} \to \M$ and
$\prj_2\colon \cone{\M}\to \er_+$ naturally split the tangent bundle
of $\cone{\M}$ as a direct sum of pull-back bundles
\begin{align}
  \Tan(\cone{\M}) &= \pb\prj_1\Tan(\M) \oplus \pb\prj_2\Tan(\er_+) .
\end{align}
We associate to a vector field $X\in\Vecf(\M)$ or to a $p$-form
$\alpha\in\Df^p(\M)$ a vector field $\cone{X}\in\Vecf(\cone{\M})$ or a
$p$-form $\cone{\alpha}\in\Df^p(\cone{\M})$, respectively, by
\begin{align}
\label{eq:coneform}
  \cone{X} &= \tfrac{1}{r}\, \pb\prj_1 (X) ,
  \quad \cone{\alpha} = r^p\, \pb\prj_1 (\alpha) .
\end{align}
We also denote by $\vr$ and $\dr$ the pull-backs to $\cone{\M}$
of the canonical unit vector field and coordinate 1-form on $\er_+$,
respectively.

In order to express the covariant derivative $\cone{\covd}$ induced by
the Levi-Civita connection on $\cone{\M}$ in terms of $\covd$ on $\M$, we
first compute the commutators
\begin{align}
\label{eq:conecommut}
  [\cone{X}, \cone{Y}] &= \tfrac{1}{r} \cone{[X, Y]},
  \quad [\cone{X}, \vr] = \tfrac{1}{r}\, \cone{X} ,
  \quad \text{for all} \quad X,Y \in \Vecf(\M) .
\end{align}
Subsequently, we have
\begin{align}
\label{eq:conecovdvect}
\begin{aligned}
  & \cone{\covd}_{\cone{X}} \cone{Y}
    = \tfrac{1}{r} (\cone{\covd_X Y} - \eps g (X, Y)\, \vr) ,
  \quad && \cone{\covd}_{\vr} \cone{X} = 0 ,
\\
  & \cone{\covd}_{\cone{X}} \vr
    = \tfrac{1}{r}\, \cone{X} ,
  \quad && \cone{\covd}_{\vr} \vr = 0 ,
\end{aligned}
\end{align}
and dually for $\alpha \in \Df^p(\M)$
\begin{align}
\label{eq:conecovdform}
\begin{aligned}
  & \cone{\covd}_{\cone{X}} \cone{\alpha}
    = \tfrac{1}{r} (\cone{\covd_X \alpha}
        - \dr \extp \cone{(X \intp \alpha)}) ,
  \quad && \cone{\covd}_{\vr} \cone{\alpha} = 0 ,
\\
  & \cone{\covd}_{\cone{X}} (\dr)
    = \tfrac{1}{r}\, \eps \dual{\cone{X}} ,
  \quad && \cone{\covd}_{\vr} (\dr) = 0 .
\end{aligned}
\end{align}

\begin{rem}
It follows from the comparison 
\begin{align*}
\begin{aligned}
  & \widetilde{X} = \pb\prj_1 (X) = r \cone{X} ,
  \quad && \widetilde{\alpha} = \pb\prj_1 (\alpha)
    = \tfrac{1}{r^p}\, \cone{\alpha},
\end{aligned}
\end{align*}
that our formulas \eqref{eq:conecovdvect}, \eqref{eq:conecovdform} are
equivalent to the frequently used formulas 
\begin{align*}
\begin{aligned}
  & \cone{\covd}_{\widetilde{X}} \widetilde{Y}
    = \widetilde{\covd_X Y} - r \eps g (X, Y)\, \vr,
  \quad && \cone{\covd}_{\vr} \widetilde{X}
    = \tfrac{1}{r} \widetilde{X} ,
\\
  & \cone{\covd}_{\widetilde{X}} \widetilde{\alpha}
    = \widetilde{\covd_X \alpha}
      - \tfrac{1}{r} \dr \extp \widetilde{(X \intp \alpha)} ,
  \quad && \cone{\covd}_{\vr} \widetilde{\alpha}
    = -\tfrac{p}{r}\, \widetilde{\alpha} ,
\end{aligned}
\end{align*}
cf.\ \cite{semmelmann2003}.  The advantage of our conventions is that
the lifts of vector fields and differential forms on the cone are always
parallel in the radial direction.  Moreover, the inner product of vector
fields is preserved.
\end{rem}

Let $f = (X_1,\dots,X_n)$ be a local orthonormal frame on $\M$ and
$\omega_i^{jk}$ the corresponding local connection form on
$\M$,\footnote{
  Note that we have raised the index $j$, which corresponds to an
  isomorphism between the Lie algebra $\so(n_+,n_-)$ and the space of
  skew-symmetric bivectors.  This is convenient for subsequent
  computations of the spin connection (cf.\ Lemma
  \ref{lem:conecovdspinor}) without explicit sign changes depending on
  the signature $(n_+,n_-)$.
}
\begin{align}
  \covd_{X_i} Y &= \tsum_{j,k=1}^n
      \omega_i^{jk}\, g(Y,X_j)\, X_k ,
  \quad \text{for all} \quad Y \in \Vecf(\M) .
\end{align}
Then $\cone{f} = (\cone{X}_1, \dots, \cone{X}_n, \vr)$ is a local
orthonormal frame on $\cone{\M}$ and from \eqref{eq:conecovdvect} we get
the corresponding local connection form
$\cone{\omega}_i^{jk}$ on $\cone{\M}$,
\begin{align}
\label{eq:coneconnform}
\begin{aligned}
  & \cone{\omega}_i^{jk}
    = \tfrac{1}{r}\, \omega_i^{jk} ,
  \quad && \cone{\omega}_i^{j(n+1)}
    = - \cone{\omega}_i^{(n+1)j}
    = - \tfrac{1}{r}\, \eps\, \delta_i^j ,
\\
  & \cone{\omega}_{n+1}^{jk} = 0 ,
  \quad && \cone{\omega}_{n+1}^{j(n+1)}
    = - \cone{\omega}_{n+1}^{(n+1)j} = 0 ,
\end{aligned}
\end{align}
using the fact $\cone{g}(\vr,\vr) = \eps$.


\paragraph{Spinors on the cone.}

The Clifford algebra $\Cl(n_+,n_-)$ is a subalgebra of
$\Cl(\cone{n}_+,\cone{n}_-)$ and similarly the spin group
$\Spin(n_+,n_-)$ is a subgroup of\/ $\Spin(\cone{n}_+,\cone{n}_-)$.
The corresponding complex spinor spaces $\Spnr$ and $\cone{\Spnr}$ can
be related as $\Cl(n_+,n_-)$-modules by the isomorphisms
\begin{enumerate}
\item[(a)]
if $n$ is even then
$\cone{\Spnr} \cong \Spnr$,
\item[(b)]
and if $n$ is odd then
$\cone{\Spnr} \cong \Spnr \oplus \widehat{\Spnr}$, where
$\widehat{\Spnr}$ is a second irreducible complex
$\Cl(n_+,n_-)$-module not isomorphic to $\Spnr$.
\end{enumerate}
In both cases there is a unique, up to a $\Cl(n_+,n_-)$-equivariant 
isomorphism,  embedding $\Spnr\subset\cone{\Spnr}$.
We also introduce two other modified embeddings $\phpm \colon \Spnr
\to \cone{\Spnr}$, given by
\begin{align}
\label{eq:phpm}
  \phpm (\Psi) = (1 \mp \sqrt{\eps}\, e_{n+1}) \clp \Psi ,
\end{align}
where $\{e_1,\dots, e_n\}$ is an orthonormal basis of\/ $\V$ and
$\{e_1,\dots, e_n, e_{n+1}\}$ is an orthonormal
basis of\/ $\cone{\V} = \er^{\cone{n}_+,\cone{n}_-}$.
All the subsequent formulas are valid for both choices of the square
root sign, so we choose $\sqrt{\eps} = 1$ for $\eps = 1$ and
$\sqrt{\eps} = \ii$ for $\eps = -1$, respectively.
A straightforward computation based on $(e_{n+1} \clp)^2 = -\eps$ shows
\begin{align}
\begin{aligned}
\label{eq:spindimplus1}
  \phpm (e_i \clp \Psi)
    &= \pm \sqrt{\eps}\, e_i \clp e_{n+1} \clp \phpm (\Psi) ,
\\
  \phpm(e_i \clp e_j \clp \Psi)
    &= e_i \clp e_j \clp \phpm (\Psi),
\\
\ph_+\circ\ph_-(\Psi) &= \ph_-\circ\ph_+(\Psi) =2\Psi .		
\end{aligned}
\end{align}
In particular, the embeddings $\phpm$ are $\Spin(n_+,n_-)$-equivariant and 
injective. In a slightly different notation, this construction can 
be found in \cite{bar1993}, \cite[pp.\ 17--19]{baum1991}.

The cone $\cone{\M}$ is clearly homotopy equivalent to $\M$, hence any
spin structure on $\M$ determines a unique spin structure on
$\cone{\M}$.  In more detail, we construct the spin structure
$\Pb_\Spin(\cone{\M})$ by taking the pull-back of the spin structure
$\Pb_\Spin(\M)$ to $\cone{\M}$ and extending the structure group,
\begin{align}
  \Pb_\Spin(\cone{\M})
    &= \pb\prj_1 \Pb_\Spin(\M)
        \times_{\Spin(n_+,n_-)} \Spin(\cone{n}_+, \cone{n}_-) .
\end{align}
This extension is compatible with the above construction of the
orthonormal frame $\cone{f}$ from $f$, namely, if $f_s$ is a lift of
$f$ then $\cone{f}_s = \pb\prj_1(f_s)$ is a lift of $\cone{f}$.

Hence we can reduce the structure group of natural bundles on the cone
to $\Spin(n_+,n_-)$, in particular, the spinor bundle is given by
\begin{align}
  \Sb(\cone{\M}) &= \pb\prj_1 \Pb_\Spin(\M)
        \times_{\Spin(n_+,n_-)} \cone{\Spnr} ,
\end{align}
and the pull-back $\pb\prj_1 \Sb(\M)$ is canonically a subbundle of
$\Sb(\cone{\M})$.  Now we use the equivariant embeddings $\phpm$
and associate to a spinor field $\Psi \in \Sf(\M)$ spinor fields
$\cone{\Psi}_\pm \in \Sf(\cone{\M})$ by
\begin{align}
  \cone{\Psi}_\pm
    &= (1 \mp \sqrt{\eps}\, \vr) \clp \pb\prj_1 \Psi .
\end{align}
The two choices of the sign yield inequivalent though analogous
results and we shall consider both of them.

\begin{lem}
\label{lem:conecovdspinor}
Let $\Psi$ be a spinor field on $\M$ and\/ $\cone{\Psi}_\pm$ the
associated spinor fields on the cone $\cone{\M}$. The covariant
derivative of\/ $\cone{\Psi}_\pm$ is given by the equations
\begin{align}
\label{eq:conecovdspinor}
  \cone{\covd}_{\cone{X}} (\cone{\Psi}_\pm)
    &= \tfrac{1}{r} \cone{ \Big( \covd_X (\Psi)
        \mp \tfrac{1}{2} \sqrt{\eps}\, X \clp \Psi \Big)}_\pm ,
  \quad \cone{\covd}_{\vr} (\cone{\Psi}_\pm) = 0 ,
\end{align}
for all $X \in \Tan(\M)$.
\end{lem}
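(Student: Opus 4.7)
The plan is to apply the Leibniz rule for the spin connection $\cone\covd$ to the defining factorization $\cone\Psi_\pm = (1 \mp \sqrt{\eps}\,\vr) \clp \pb\prj_1\Psi$, which reduces the problem to computing $\cone\covd(\pb\prj_1\Psi)$ together with the already-known identity $\cone\covd_{\cone X}\vr = \tfrac{1}{r}\cone X$ from \eqref{eq:conecovdvect}. I work in a local orthonormal frame $f$ on $\M$ with Levi-Civita connection form $\omega$ and its lifted frame $\cone f$ on $\cone\M$ with connection form $\cone\omega$ given by \eqref{eq:coneconnform}, using the basic fact that the components of $\pb\prj_1\Psi$ in the lifted spin frame $\pb\prj_1 f_s$ are radially constant pullbacks of the components of $\Psi$.

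For $\cone\covd_{\cone X}(\pb\prj_1\Psi)$, I expand the standard spin-connection formula $\tfrac14 \sum_{j,k=1}^{n+1} \cone\omega_i^{jk}\, e_j \clp e_k \clp (\cdot)$. Splitting at $k = n+1$, the $j,k \leq n$ block rescales by $1/r$ and, combined with the horizontal frame derivative (also rescaled by $1/r$), reassembles into $\tfrac{1}{r}\pb\prj_1(\covd_X \Psi)$; the mixed block, thanks to $\cone\omega_i^{j(n+1)} = -\tfrac{\eps}{r}\delta_i^j$, contributes the single extra term $-\tfrac{\eps}{2r}\cone X \clp \vr \clp \pb\prj_1\Psi$. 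For the radial direction, $\cone\omega_{n+1}^{jk} = 0$ together with radial constancy yields $\cone\covd_{\vr}(\pb\prj_1\Psi) = 0$ at once.

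Substituting into the Leibniz expansion of $\cone\covd_{\cone X}\cone\Psi_\pm$ produces the expected $\tfrac{1}{r}\cone{(\covd_X\Psi)}_\pm$ together with two residual terms, namely $\mp\tfrac{\sqrt{\eps}}{r}\cone X \clp \pb\prj_1\Psi$ coming from the $\vr$-variation and $-\tfrac{\eps}{2r}(1 \mp \sqrt{\eps}\,\vr) \clp \cone X \clp \vr \clp \pb\prj_1\Psi$ coming from the extra spin-connection piece above. Using $\vr \clp \cone X = -\cone X \clp \vr$ and $\vr \clp \vr = -\eps$ to expand the cubic product, these residuals collapse exactly to $\mp\tfrac{\sqrt{\eps}}{2r}(1 \mp \sqrt{\eps}\,\vr) \clp \cone X \clp \pb\prj_1\Psi = \mp\tfrac{\sqrt{\eps}}{2r}\cone{(X \clp \Psi)}_\pm$, which is the missing term in the desired formula. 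The radial case $\cone\covd_{\vr}\cone\Psi_\pm = 0$ then follows immediately from Leibniz and $\cone\covd_{\vr}\vr = 0$.

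The calculation is mechanical; the main subtlety is bookkeeping, namely keeping the Clifford signs and $\sqrt{\eps}$-factors coherent across the two choices $\pm$ throughout the expansion, especially when simplifying $(1 \mp \sqrt{\eps}\,\vr) \clp \cone X \clp \vr$ so that exactly one $\cone X$-term survives with coefficient $\mp\tfrac{\sqrt{\eps}}{2r}$ and the $\cone X \clp \vr$-term reorganizes correctly into the prefactor of the Clifford embedding $\phpm$.
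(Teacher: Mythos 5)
Your proposal is correct and follows essentially the same route as the paper: both work in a lifted local spin frame, use the cone connection form \eqref{eq:coneconnform}, and split the spin-connection sum at the radial index $n+1$, with the mixed block producing the extra Clifford term and the radial case following from $\cone{\omega}_{n+1}=0$ and radial constancy. The only difference is bookkeeping: the paper carries the whole computation inside the embedding $\phpm$ and converts $e_i \clp e_{n+1} \clp \phpm(\cdot)$ via the intertwining relations \eqref{eq:spindimplus1}, whereas you peel off the factor $(1 \mp \sqrt{\eps}\,\vr)$ by the Leibniz rule (using compatibility of the spin connection with Clifford multiplication and $\cone{\covd}_{\cone{X}}\vr = \tfrac{1}{r}\,\cone{X}$) and recombine the residual terms by direct Clifford algebra --- the same computation in a slightly different order, and your sign analysis does close correctly.
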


\begin{proof}
Let $f$ be a local orthonormal frame field on $\M$ and $f_s$ its lift
to a spin frame field.  The covariant derivative of $\Psi$ is in
general given by
\begin{align}
\label{eq:spinconnform}
  \covd_{X_i} \Psi &= \covd_{X_i} [f_s, s]
    = \Big[ f_s,\; X_i (s)
          + \tfrac{1}{4} \tsum_{j,k}  \omega_i^{jk}\,
            e_j \clp e_k \clp s \Big] ,
\end{align}
where $s$ is the $\Spnr$-valued function which corresponds to
$\Psi$ with respect to $f_s$.  Next let $\cone{f}$ and $\cone{f}_s$ be
the associated frame fields on $\cone{\M}$.  We substitute into
\eqref{eq:spinconnform} the formulas \eqref{eq:coneconnform} for the
connection form on $\cone{\M}$ and compute using
\eqref{eq:spindimplus1}:
\begin{align*}
\newcommand\pbs{\pb\prj_1 s}
\begin{split}
  &\cone{\covd}_{\cone{X}_i} \cone{\Psi}_\pm
    = \cone{\covd}_{\cone{X}_i} \Big[ \cone{f}_s,
        \phpm (\pbs)
      \Big]
    = {}
\\
    &= \Big[ \cone{f}_s,\;
        \cone{X}_i (\phpm (\pbs))
        + \tfrac{1}{4} \tsum_{j,k}
           \cone{\omega}_i^{jk}\,
          e_j \clp e_k \clp \phpm (\pbs)
        + {}
\\
    &\qquad\qquad
        {} + \tfrac{1}{2} \tsum_j
           \cone{\omega}_i^{j(n+1)}\,
          e_j \clp e_{n+1} \clp \phpm (\pbs)
      \Big]
    = {}
\\
    &= \tfrac{1}{r} \Big[ \cone{f}_s,\;
        \pb\prj_1 X_i (\phpm (\pbs))
        + \tfrac{1}{4} \tsum_{j,k}
           \omega_i^{jk}\,
          e_j \clp e_k \clp \phpm (\pbs)
        - {}
\\
    &\qquad\qquad
        {} - \tfrac{1}{2}\, \eps \tsum_j
          \delta_i^j\,
          e_j \clp e_{n+1} \clp \phpm (\pbs)
      \Big]
    = {}
\\
    &= \tfrac{1}{r} \Big[ \cone{f}_s,\;
        \phpm \Big(
          \pb\prj_1 \Big(
            X_i (s)
            + \tfrac{1}{4} \tsum_{j,k}
               \omega_i^{jk}\,
              e_j \clp e_k \clp s
            \mp {}
\\
    &\qquad\qquad
            {} \mp \tfrac{1}{2} \sqrt{\eps}\,
              e_i \clp s
          \Big)
        \Big)
      \Big] = {}
\\
    &=
      \tfrac{1}{r} \cone{\Big(
        \covd_{X_i} (\Psi)
        \mp \tfrac{1}{2} \sqrt{\eps}\, X_i \clp \Psi
      \Big)}_\pm,
\end{split}
\end{align*}
where the indices $i, j, k$ always run through $1,\dots,n$.  The proof
of the second equality is trivial.
\end{proof}

Finally, to a spinor-valued $p$-form $\Phi \in \Sdf^p(\M)$ we
associate spinor-valued $p$-forms $\cone{\Phi}_\pm \in
\Sdf^p(\cone{\M})$ by
\begin{align}
\label{eq:conesf}
  \cone{\Phi}_\pm
    &= r^p (1 \mp \sqrt{\eps}\, \vr) \clp \pb\prj_1 \Phi .
\end{align}
Combining \eqref{eq:conecovdform} and \eqref{eq:conecovdspinor} we get
\begin{align}
\label{eq:conecovdspinform}
\begin{aligned}
  \cone{\covd}_{\cone{X}} \cone{\Phi}_\pm
    &= \tfrac{1}{r} \Big(
        \cone{ (
          \covd_X \Phi
          \mp \tfrac{1}{2} \sqrt{\eps}\, X \clp \Phi )
        }_\pm
        - \dr \extp \cone{(X \intp \Phi)}_\pm
      \Big),
  \quad & \cone{\covd}_{\vr} \cone{\Phi} &= 0,
\end{aligned}
\end{align}
for all $\Phi \in \Sdf^p(\M)$ and $X \in \Tan(\M)$.


\paragraph{Killing equations and the cone.}

The rest of this paper is devoted to the main results which establish
a correspondence between special solutions of the Killing equations on
$\M$ and suitable parallel sections on the cone $\cone{\M}$.

Comparing \eqref{eq:conecovdspinor} with \eqref{eq:ks} we immediately
get the correspondence for Killing spinors, c.f.\ \cite{bar1993} for
the Riemannian case and \cite{bohle2003} for the general
pseudo-Riemannian case.

\begin{col}
Let $\Psi$ be a spinor field on $\M$.  The associated spinor field\/
$\cone{\Psi}_\pm$ on $\cone{\M}$ is parallel if and only if\/ $\Psi$
is a Killing spinor with the Killing number $a = \pm \tfrac{1}{2}
\sqrt{\eps}$.
\end{col}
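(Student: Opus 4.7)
The plan is to derive this corollary as an immediate consequence of Lemma \ref{lem:conecovdspinor}, since the right-hand side of \eqref{eq:conecovdspinor} already isolates precisely the combination appearing in the Killing spinor equation \eqref{eq:ks}.

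First I would invoke the splitting $\Tan(\cone{\M}) = \pb\prj_1 \Tan(\M) \oplus \pb\prj_2 \Tan(\er_+)$ to observe that $\cone{\Psi}_\pm$ is parallel if and only if $\cone{\covd}_{\cone{X}}(\cone{\Psi}_\pm)=0$ for every $X \in \Vecf(\M)$ and simultaneously $\cone{\covd}_{\vr}(\cone{\Psi}_\pm) = 0$. The radial equation holds automatically by the second part of Lemma \ref{lem:conecovdspinor}, so the parallelism condition reduces to
$$
  \tfrac{1}{r} \cone{\Big(
    \covd_X \Psi \mp \tfrac{1}{2}\sqrt{\eps}\, X \clp \Psi
  \Big)}_\pm = 0
  \quad \text{for all } X \in \Tan(\M).
$$

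Next I would verify that the association $\Psi \mapsto \cone{\Psi}_\pm = (1 \mp \sqrt{\eps}\, \vr) \clp \pb\prj_1 \Psi$ is pointwise injective, so that the vanishing displayed above forces the bracketed expression on $\M$ itself to vanish. This reduces to injectivity of the algebraic embeddings $\phpm$ from \eqref{eq:phpm}, which is immediate from the composition identity $\ph_+ \circ \ph_- = \ph_- \circ \ph_+ = 2\,\mathrm{id}$ recorded in \eqref{eq:spindimplus1}; the pull-back $\pb\prj_1$ is fibrewise injective as well. After cancelling the nonzero factor $1/r$ and stripping the injective outer operation, the parallelism condition collapses to $\covd_X \Psi = \pm \tfrac{1}{2}\sqrt{\eps}\, X \clp \Psi$ for every $X \in \Tan(\M)$, which is exactly \eqref{eq:ks} with Killing number $a = \pm \tfrac{1}{2}\sqrt{\eps}$; conversely, if $\Psi$ satisfies this equation then substituting back into Lemma \ref{lem:conecovdspinor} yields parallelism of $\cone{\Psi}_\pm$.

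There is no genuine obstacle — the statement is essentially a bookkeeping reformulation of Lemma \ref{lem:conecovdspinor}. The only point needing a moment of care is the injectivity of the lift $\Psi \mapsto \cone{\Psi}_\pm$, without which one could in principle lose information when passing from the vanishing of $\cone{(\,\cdot\,)}_\pm$ to the vanishing of the enclosed expression on $\M$; the identity in \eqref{eq:spindimplus1} takes care of precisely this subtlety.
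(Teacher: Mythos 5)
Your proposal is correct and follows exactly the paper's route: the corollary is obtained by comparing Lemma \ref{lem:conecovdspinor} with the Killing equation \eqref{eq:ks}, the radial direction being automatic and the tangential condition reducing to \eqref{eq:ks} with $a=\pm\tfrac12\sqrt{\eps}$. Your extra remark on injectivity of the lift (via $\ph_+\circ\ph_-=2\,\mathrm{id}$ in \eqref{eq:spindimplus1}) is the one detail the paper leaves implicit, and it is handled correctly.
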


The next correspondence holds only for special Killing forms
introduced by Tachibana and Yu in \cite{tachibana1970} by an
additional second order condition.  Here we present a slightly
generalized version of Semmelmann's result from \cite{semmelmann2003}
by considering also the case $\eps = -1$.

\begin{dfn}
A special Killing $p$-form is a Killing $p$-form $\alpha$ fulfilling
\begin{align}
\label{eq:skf}
  \covd_X (\dif \alpha) &= b \dual{X} \extp \alpha ,
  \quad \text{for all} \quad X \in \Tan(\M) ,
\end{align}
where $b \in \er$ is arbitrary constant.
\end{dfn}

\begin{prop}
Let $\alpha$ be a $p$-form on $\M$.  The $(p+1)$-form $\beta$ on
$\cone{\M}$ defined by
\begin{align}
  \beta = \dr \extp \cone{\alpha}
      + \tfrac{1}{p+1}\, \cone{\dif \alpha}
\end{align}
is parallel for $\cone{\covd}$ if and only $\alpha$ is a special Killing form with
constant $b = -\eps (p+1)$.
\end{prop}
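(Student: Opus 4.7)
The plan is to compute $\cone\covd\beta$ directly from the cone formulas \eqref{eq:conecovdform} and to exploit the fact that the result decomposes cleanly into a purely horizontal part and a $\dr$-part; these will read off the two conditions defining a special Killing form.

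First, I observe that $\cone\covd_{\vr}\beta = 0$ holds automatically: each building block $\dr$, $\cone\alpha$, $\cone{\dif\alpha}$ is parallel in the radial direction by \eqref{eq:conecovdform}, so this component imposes no condition on $\alpha$.

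Next, I expand $\cone\covd_{\cone X}\beta$ for $X \in \Vecf(\M)$. Applying \eqref{eq:conecovdform} and the Leibniz rule, using $\cone\covd_{\cone X}(\dr) = \tfrac{\eps}{r}\dual{\cone X}$, and the identifications $\dual{\cone X} = \cone{\dual X}$ and $\cone{\dual X}\extp\cone\alpha = \cone{\dual X \extp \alpha}$ dictated by the conventions \eqref{eq:coneform}, one $\dr\extp\dr$ term drops out by antisymmetry and the remainder groups as
\begin{align*}
r\,\cone\covd_{\cone X}\beta
  &= \Bigl(\eps\, \cone{\dual X \extp \alpha}
      + \tfrac{1}{p+1}\, \cone{\covd_X \dif\alpha}\Bigr)
   + \dr \extp \cone{\bigl(\covd_X \alpha
      - \tfrac{1}{p+1}\, X \intp \dif\alpha\bigr)} .
\end{align*}
The first bracket is a pull-back of a $(p+1)$-form on $\M$ and contains no $\dr$, while the second term lies in $\dr\extp\Df^p(\cone\M)$; these two subspaces are complementary, so $\beta$ being parallel is equivalent to both brackets vanishing for every $X$.

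The $\dr$-term vanishes iff $\covd_X\alpha = \tfrac{1}{p+1}\,X\intp\dif\alpha$, which is the Killing equation \eqref{eq:kf}, and the horizontal bracket vanishes iff $\covd_X\dif\alpha = -\eps(p+1)\,\dual X \extp \alpha$, which is \eqref{eq:skf} with $b = -\eps(p+1)$. Hence $\beta$ is parallel precisely when $\alpha$ is a special Killing form with this constant. The only real obstacle is careful bookkeeping of the $r$-weights built into $\cone X = \tfrac{1}{r}\pb\prj_1 X$ and $\cone\alpha = r^p\pb\prj_1\alpha$; once the global $\tfrac{1}{r}$ factors coming from \eqref{eq:conecovdform} are collected onto the left-hand side as the factor $r$, the splitting into horizontal and $\dr$ components is immediate and the identification of the two resulting equations with \eqref{eq:kf} and \eqref{eq:skf} is mechanical.
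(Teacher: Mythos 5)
Your proof is correct and follows essentially the same route as the paper: compute $\cone{\covd}_{\cone X}\beta$ via \eqref{eq:conecovdform}, note $\cone{\covd}_{\vr}\beta=0$ automatically, and split the result into the $\dr\extp(\cdot)$ and purely horizontal parts, which vanish precisely when \eqref{eq:kf} and \eqref{eq:skf} with $b=-\eps(p+1)$ hold. Your write-up merely makes explicit the bookkeeping (the identifications $\dual{\cone X}=\cone{\dual X}$, the vanishing $\dr\extp\dr$ term, and the complementarity of the two components) that the paper leaves implicit.
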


\begin{proof}
We compute the covariant derivative of $\beta$ using
\eqref{eq:conecovdform}:
\begin{align*}
  & \cone{\covd}_{\cone{X}} \beta
    = \tfrac{1}{r} \Big(
        \dr \extp \Big( \cone{
            \covd_X \alpha
            - \tfrac{1}{p+1}\, X \intp \dif \alpha
          }
        \Big)
        + \cone{
          \tfrac{1}{p+1} \covd_X (\dif \alpha)
          + \eps \dual{X} \extp \alpha
        }
      \Big) ,
\\
  & \cone{\covd}_{\vr} \beta = 0 .
\end{align*}
The claim now follows from \eqref{eq:kf} and \eqref{eq:skf}.
\end{proof}

Analogously to the case of forms itself, the correspondence for
Killing spinor-valued forms holds only for special Killing
spinor-valued forms.  In this case the second order condition which
fits the cone construction has rather complicated form.

\begin{dfn}
A special Killing spinor-valued $p$-form is a Killing spinor-valued
$p$-form $\Phi$ fulfilling
\begin{align}
\label{eq:sksf}
\begin{split}
  & \covd_X (\dif \Phi)
    = b \dual{X} \extp \Phi
      + a \Big(
        X \clp \dif\Phi
        + \tfrac{1}{p+1}\, \cle (X \intp \dif\Phi)
      \Big) + {}
\\ & \quad
      + a^2 \Big(
        2 \dual{X} \extp \Phi
        + \tfrac{2p+1}{p+1}\, \cle (X \clp \Phi)
        + \tfrac{1}{p+1}\, \cle (\cle (X \intp \Phi))
      \Big) ,
\end{split}
\end{align}
for all $X \in \Tan(\M)$, where $a \in \ce$ is the Killing number of
$\Phi$ and $b \in \er$ is another arbitrary constant.
\end{dfn}

The exact form of all the terms containing $\clf$ in both
defining equations \eqref{eq:ksf} and \eqref{eq:sksf} is prescribed
purely by algebraic constraints deduced from the
decomposition \eqref{eq:dersfdec}. For an illustration of the algebraic
constraints in the case of primitive spinor-valued forms, see Lemma \ref{lem:primksf}.

\begin{prop}
\label{prop:coneksf}
Let $\Phi$ be a spinor-valued $p$-form on $\M$. The spinor-valued
$(p+1)$-form\/ $\Xi_\pm$ on the cone $\cone{\M}$ defined by
\begin{align}
\label{eq:coneksf}
  \Xi_\pm &= \dr \extp \cone{\Phi}_\pm
      \mp \tfrac{1}{2(p+1)} \sqrt{\eps}\, \cone{\cle\Phi}_\pm
      + \tfrac{1}{p+1}\, \cone{\dif\Phi}_\pm
\end{align}
is parallel if and only if\/ $\Phi$ is special Killing with Killing
number $a = \pm \tfrac{1}{2} \sqrt{\eps}$ and constant $b = -\eps
(p+1)$.
\end{prop}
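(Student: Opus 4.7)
The strategy is to expand $\cone{\covd}\,\Xi_\pm$ term by term using \eqref{eq:conecovdform} and \eqref{eq:conecovdspinform}, split the resulting expression into a piece containing the factor $\dr$ and a purely tangential piece (pulled back from $\M$), and identify the vanishing of the two pieces with the first-order Killing equation \eqref{eq:ksf} and the second-order special condition \eqref{eq:sksf}, respectively. The radial direction is automatic: by \eqref{eq:conecovdform} and \eqref{eq:conecovdspinform} the lifts $\cone{\Phi}_\pm$, $\cone{\cle\Phi}_\pm$ and $\cone{\dif\Phi}_\pm$ are parallel along $\vr$ and $\cone{\covd}_{\vr}(\dr)=0$, so $\cone{\covd}_{\vr}\Xi_\pm = 0$ holds unconditionally.

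For the tangential derivative I would apply the Leibniz rule to the three summands of $\Xi_\pm$. The term $\cone{\covd}_{\cone{X}}(\dr\extp\cone{\Phi}_\pm)$ produces $\tfrac{1}{r}\eps\,\dual{\cone{X}}\extp\cone{\Phi}_\pm$ coming from $\cone{\covd}_{\cone{X}}(\dr)$, together with $\dr\extp\cone{\covd}_{\cone{X}}\cone{\Phi}_\pm$ whose $\dr\extp\dr\extp\cdot$ contribution drops out. The derivatives of $\cone{\cle\Phi}_\pm$ and $\cone{\dif\Phi}_\pm$ are obtained by applying \eqref{eq:conecovdspinform} to the spinor-valued $(p+1)$-forms $\cle\Phi$ and $\dif\Phi$, using $\covd(\clf)=0$ to commute $\covd_X$ past $\cle$. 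After multiplying by $r$, I would collect all resulting terms and sort them according to whether they carry a factor $\dr$, obtaining two groups that must vanish independently.

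The tangential group gathers $\cone{\covd_X\Phi}_\pm$, $\cone{X\clp\Phi}_\pm$, $\cone{X\intp(\cle\Phi)}_\pm$ and $\cone{X\intp\dif\Phi}_\pm$ with explicit coefficients. Using \eqref{eq:clf} to rewrite $X\intp(\cle\Phi)=X\clp\Phi-\cle(X\intp\Phi)$, this group collapses into the lift of \eqref{eq:ksf} rearranged to zero with $a=\pm\tfrac{1}{2}\sqrt{\eps}$; its vanishing is therefore equivalent to $\Phi$ being Killing with that $a$. The $\dr$-group, after factoring out $\dr\extp$, contains $\tfrac{1}{p+1}\cone{\covd_X(\dif\Phi)}_\pm$, the term $\eps\cone{\dual X\extp\Phi}_\pm$ arising from $\cone{\covd}_{\cone{X}}(\dr)$, and the $\sqrt{\eps}$-contributions from \eqref{eq:conecovdspinform} applied to $\cle\Phi$ and $\dif\Phi$, including cross-products in which two $\sqrt{\eps}$ factors combine to an overall $\eps$.

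The main obstacle is the Clifford-algebra bookkeeping in the $\dr$-group: the terms $X\clp\dif\Phi$, $\cle(X\clp\Phi)$, $\cle\cle(X\intp\Phi)$ and $\dual X\extp\Phi$ have to reassemble in the exact combination prescribed by \eqref{eq:sksf}, with the linear-in-$\sqrt{\eps}$ terms matching the $a$-coefficient $\pm\tfrac{1}{2}\sqrt{\eps}$, the quadratic $\eps$-terms matching the $a^2$-coefficient, and the $\eps\cone{\dual X\extp\Phi}_\pm$ contribution producing $b=-\eps(p+1)$. This is a careful but mechanical application of \eqref{eq:clf} to normal-order each Clifford product. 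Once verified, the two independent identifications above yield both directions of the equivalence simultaneously.
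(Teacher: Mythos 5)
Your plan follows the paper's own proof: expand $\cone{\covd}_{\cone{X}}\Xi_\pm$ via \eqref{eq:conecovdform} and \eqref{eq:conecovdspinform}, note that $\cone{\covd}_{\vr}\Xi_\pm=0$ holds automatically, split the result into the part wedged with $\dr$ and the part pulled back from $\M$, and match the two against \eqref{eq:ksf} and \eqref{eq:sksf}. Two points in your write-up need correcting, though neither harms the strategy. First, you have the two labels interchanged: since $\cone{\covd}_{\cone{X}}(\dr)=\tfrac{\eps}{r}\,\dual{\cone{X}}$ is a tangential $1$-form, the term $\eps\,\dual{X}\extp\Phi$ lands in the component \emph{without} $\dr$ (so it cannot be obtained by ``factoring out $\dr\extp$''), and it is the $\dr\extp$-component that collects $\covd_X\Phi$, $X\clp\Phi$, $X\intp(\cle\Phi)$, $X\intp\dif\Phi$ and hence reproduces the first-order equation \eqref{eq:ksf} with $a=\pm\tfrac12\sqrt{\eps}$; the genuinely tangential component is the one containing $\covd_X(\dif\Phi)$ and gives the second-order condition. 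Your grouping of terms is the correct one; only the radial/tangential attribution is backwards. Second, the two conditions are not ``independent identifications'' as stated: the tangential component contains $\cle\covd_X\Phi$ (via $\covd_X(\cle\Phi)=\cle\covd_X\Phi$), and the terms $\cle(X\intp\dif\Phi)$, $\cle(X\clp\Phi)$, $\cle(\cle(X\intp\Phi))$ you list arise only after substituting the first-order equation into this term and normal-ordering with \eqref{eq:clf}, exactly as the paper does; it is this substitution that turns the tangential condition into \eqref{eq:sksf} with $b=-\eps(p+1)$, and it is legitimate for both directions of the equivalence because a special Killing spinor-valued form is by definition also Killing.
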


\begin{proof}
We compute the covariant derivative of $\Xi_\pm$ using
\eqref{eq:conecovdform} and \eqref{eq:conecovdspinform}:
\begin{align*}
\begin{split}
  & \cone{\covd}_{\cone{X}} \Xi_\pm
    = \tfrac{1}{r} \Big(
        \Big(
          \cone{ (
            \eps\dual{X} \extp \Phi )
          }_\pm
          + \dr \extp \cone{ \Big(
            \covd_X \Phi
            \mp \tfrac{1}{2} \sqrt{\eps}\, X \clp \Phi
           \Big) }_\pm
        \Big) \mp {}
\\ & \quad
        \mp \tfrac{1}{2(p+1)} \sqrt{\eps} \Big(
          \cone{ (
            \covd_X (\cle\Phi)
            \mp \tfrac{1}{2} \sqrt{\eps}\, X \clp (\cle\Phi)
          )}_\pm
          - \dr \extp \cone{(
            X \intp (\cle\Phi)
          )}_\pm
        \Big) + {}
\\ & \quad
        + \tfrac{1}{p+1} \Big(
          \cone{ (
            \covd_X (\dif\Phi)
            \mp \tfrac{1}{2} \sqrt{\eps}\, X \clp \dif\Phi
          ) }_\pm
          - \dr \extp \cone{(
            X \intp \dif\Phi
          )}_\pm
        \Big)
      \Big) ,
\end{split}
\\
  & \cone{\covd}_{\vr} \Xi_\pm = 0 .
\end{align*}
We now separate the radial and tangential components in the first equation
of the last display and get that $\Xi_\pm$ is parallel if and only if for all $X \in
\Tan(\M)$
\begin{align*}
  \covd_X \Phi
    & = \pm \tfrac{1}{2} \sqrt{\eps}\, \Big(
        X \clp \Phi
        - \tfrac{1}{p+1} X \intp (\cle \Phi)
      \Big)
      + \tfrac{1}{p+1}\, X \intp \dif\Phi ,
\\
\begin{split}
  \covd_X (\dif\Phi)
    & = -\eps (p+1)\, \dual{X} \extp \Phi
      \pm \tfrac{1}{2} \sqrt{\eps}\, X \clp \dif\Phi
      \pm \tfrac{1}{2} \sqrt{\eps}\, \cle \covd_X \Phi
      - {}
\\ & \quad
      - \tfrac{1}{4}\, \eps\, X \clp (\cle \Phi) .
\end{split}
\end{align*}
Next we substitute the first equation into the second one and further
rearrange using the relations \eqref{eq:clf}:
\begin{align*}
\begin{split}
  & \covd_X (\dif\Phi)
     = -\eps (p+1)\, \dual{X} \extp \Phi
      \pm \tfrac{1}{2} \sqrt{\eps} \Big(
        X \clp \dif\Phi
        + \tfrac{1}{p+1} \cle (X \intp \dif\Phi)
      \Big) + {}
\\ & \quad
      + \tfrac{1}{4}\, \eps\, \Big(
        \cle \Big(
          X \clp \Phi
          - \tfrac{1}{p+1} X \intp (\cle \Phi)
        \Big)
        - X \clp (\cle \Phi)
      \Big)
\\
     & = -\eps (p+1)\, \dual{X} \extp \Phi
      \pm \tfrac{1}{2} \sqrt{\eps} \Big(
        X \clp \dif\Phi
        + \tfrac{1}{p+1} \cle (X \intp \dif\Phi)
      \Big) + {}
\\ & \quad
      + \tfrac{1}{4}\, \eps \Big(
        2 \dual{X} \extp \Phi
        + \tfrac{2p+1}{p+1}\, \cle (X \clp \Phi)
        + \tfrac{1}{p+1} \cle (\cle (X \intp \Phi))
      \Big) .
\end{split}
\end{align*}
The claim now follows from \eqref{eq:ksf} and \eqref{eq:sksf}.
\end{proof}

Let us recall the notion of primitive Killing spinor-valued $p$-form,
see \eqref{eq:primsf}.  The above correspondence applies also to this
case, and we show that it maps primitive spinor-valued forms back to
primitive spinor-valued forms.

\begin{lem}
\label{lem:primksf}
Let\/ $\Phi$ be a primitive Killing spinor-valued $p$-form on $\M$ with
Killing number $a$.  Then it holds
\begin{align}
\label{eq:primksf}
  \dcli \dif\Phi & = -a (n+2) \Phi .
\end{align}
In particular, we note that\/ $\dif\Phi$ does not need to be primitive.
\end{lem}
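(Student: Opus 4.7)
The plan is to exploit two independent expressions for the twisted Dirac operator $\Dir\Phi$ on a primitive Killing spinor-valued $p$-form and then compare them.

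First I would establish a universal Weitzenböck-type commutation identity on spinor-valued forms of arbitrary degree,
\[
\dcli \circ \dif \;+\; \dif \circ \dcli \;=\; \Dir .
\]
This follows by applying the last relation of \eqref{eq:clf}, namely $\dual{X}\extp(\dcli\Psi) + \dcli(\dual{X}\extp\Psi) = X\clp\Psi$, with $\Psi = \covd_{e_i}\Phi$ and $X = e_i$, and summing over a local orthonormal frame $\{e_i\}$. Since $\covd\clf = 0$, the algebraic operator $\dcli$ commutes with $\covd$, so $\sum_i \dual{e_i}\extp\dcli\covd_{e_i}\Phi = \dif\dcli\Phi$, while the right-hand side sums to $\Dir\Phi = \sum_i e_i\clp\covd_{e_i}\Phi$. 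For a primitive $\Phi$ the condition $\dcli\Phi = 0$ implies $\dif\dcli\Phi = 0$, and we obtain $\dcli\dif\Phi = \Dir\Phi$.

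Second, I would invoke the equivalent reformulation of \eqref{eq:ksf} stated at the end of Section \ref{sec:killing}. Its third equation reads
\[
(p+1)\,\Dir\Phi \;=\; -ap(n+2)\,\Phi \;-\; \cle\dual\dif\Phi \;+\; \dcli\dif\Phi ,
\]
while its second equation $\dual\dif\Phi = a\,\dcli\Phi$, combined with primitivity, forces $\dual\dif\Phi = 0$ and kills the middle term. Substituting $\Dir\Phi = \dcli\dif\Phi$ from the first step into the resulting identity yields $p\,\dcli\dif\Phi = -ap(n+2)\,\Phi$, which is precisely \eqref{eq:primksf}.

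The main obstacle is the first step: verifying the commutation identity $\dcli\circ\dif + \dif\circ\dcli = \Dir$. Although it is a direct consequence of \eqref{eq:clf} and $\covd\clf = 0$, some care is required with the sign bookkeeping for pseudo-orthonormal frames and their duals. Once that identity is in hand, the remainder of the argument is purely algebraic, and the primitivity hypothesis enters only through $\dcli\Phi = 0$ and its consequence $\dual\dif\Phi = 0$.
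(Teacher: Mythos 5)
Your proof is correct, but it follows a genuinely different route from the paper's. The paper argues directly from the definition: since $\covd(\clf)=0$ and $\dcli\Phi=0$, one has $0=\dcli\,\covd_X\Phi$; substituting the Killing equation \eqref{eq:ksf} and commuting $\dcli$ past $X\clp$, $X\intp$ and $\cle$ via \eqref{eq:clf} together with the $\sla(2)$ relation \eqref{eq:clfcommut} collapses everything to $0=-\tfrac{1}{p+1}\,X\intp\big(a(n+2)\Phi+\dcli\dif\Phi\big)$ for all $X$, whence \eqref{eq:primksf}. You instead combine two facts: the anticommutation identity $\dcli\circ\dif+\dif\circ\dcli=\Dir$, which indeed follows by summing the last relation of \eqref{eq:clf} over a pseudo-orthonormal frame (with the signature weights you rightly flag) and using $\covd(\clf)=0$; and the reformulation of \eqref{eq:ksf} as the system $\Twst\Phi=0$, $\dual\dif\Phi=a\,\dcli\Phi$, $(p+1)\Dir\Phi=-ap(n+2)\Phi-\cle\,\dual\dif\Phi+\dcli\dif\Phi$ stated at the end of Section \ref{sec:killing}. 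Primitivity gives $\dual\dif\Phi=0$ and $\Dir\Phi=\dcli\dif\Phi$, and the third equation then yields $p\,\dcli\dif\Phi=-ap(n+2)\Phi$; since Killing spinor-valued forms have degree $p\ge 1$, this is exactly \eqref{eq:primksf}. The trade-off: your argument is conceptually transparent -- it explains the coefficient $-a(n+2)$ as coming from the Dirac-eigenvalue-type equation combined with the Weitzenb\"ock-type identity -- but it rests on the displayed equivalent system, which the paper itself only quotes, deferring the verification to \cite{zima2014}; the paper's computation is self-contained, using nothing beyond \eqref{eq:ksf}, \eqref{eq:clf} and \eqref{eq:clfcommut}. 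The two derivations are mutually consistent, which serves as a useful cross-check of the signs in both the stated system and your identity.
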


\begin{proof}
First recall that $\covd (\clf) = 0$.  Hence the hypothesis implies 
$\covd_X\Phi$ is primitive for all $X \in \Tan(\M)$ and we compute
using \eqref{eq:clf}, \eqref{eq:clfcommut} and \eqref{eq:ksf}:
\begin{align*}
\begin{split}
  0 & = \covd_X (\dcli \Phi) = \dcli \covd_X \Phi
    = {}
\\
    &= a \dcli \Big(
        X \clp \Phi - \tfrac{1}{p+1}\, X \intp (\cle \Phi)
      \Big)
      + \tfrac{1}{p+1}\, \dcli (X \intp \dif\Phi)
    = {}
\\
  & = a \Big(
        {-2 X \intp \Phi} - X \clp (\dcli \Phi)
        + \tfrac{1}{p+1}\, X \intp (\dcli (\cle \Phi))
      \Big) - {}
\\ & \quad
      - \tfrac{1}{p+1}\, X \intp (\dcli \dif\Phi)
    = {}
\\
  & = a \Big(
        {-2 X \intp \Phi}
        - \tfrac{n-2p}{p+1}\, X \intp \Phi
        + \tfrac{1}{p+1}\, X \intp (\cle (\dcli \Phi))
      \Big) - {}
\\ & \quad
      - \tfrac{1}{p+1}\, X \intp (\dcli \dif\Phi)
    = {}
\\
  & = -\tfrac{1}{p+1}\, X \intp (a (n+2) \Phi + \dcli \dif \Phi) ,
\end{split}
\end{align*}
The claim now follows.
\end{proof}

\begin{lem}
Let\/ $\Phi$ be a spinor-valued $p$-form on $\M$ and\/
$\cone{\Phi}_\pm$ the associated spinor-valued $p$-forms on the cone
$\cone{\M}$. Then
\begin{align}
\label{eq:conedcli}
  \conedcli \cone{\Phi}_\pm
    & = \pm \sqrt{\eps}\, \vr \clp \cone{(\dcli \Phi)}_\pm .
\end{align}
\end{lem}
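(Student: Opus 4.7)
The plan is to compute $\conedcli\cone{\Phi}_\pm$ directly in the $\cone{g}$-orthonormal frame $(\cone{X}_1,\dots,\cone{X}_n,\vr)$ induced by a local $g$-orthonormal frame $(X_1,\dots,X_n)$ on $\M$, with $\eps_i = g(X_i,X_i)$. In this frame the operator decomposes as $\conedcli = \sum_i \eps_i\, \cone{X}_i\clp(\cone{X}_i\intp\cdot) + \eps\,\vr\clp(\vr\intp\cdot)$. The radial term vanishes at once because $\cone{\Phi}_\pm = r^p(1\mp\sqrt{\eps}\,\vr)\clp\pb\prj_1\Phi$ has no $\dr$ in its form part (the factor $(1\mp\sqrt{\eps}\,\vr)$ acts only on the spinor side), so $\vr\intp\cone{\Phi}_\pm = 0$.

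For each tangential summand I proceed in two steps. First, using $\cone{X}_i = \tfrac{1}{r}\pb\prj_1 X_i$ together with the naturality of interior product under pull-back—and the fact that Clifford multiplication by $\vr$ commutes with form-side interior product—I obtain $\cone{X}_i\intp\cone{\Phi}_\pm = \cone{(X_i\intp\Phi)}_\pm$. Second, since $\cone{X}_i$ and $\vr$ are $\cone{g}$-orthogonal they anticommute in the Clifford algebra, so
\begin{align*}
\cone{X}_i\clp(1\mp\sqrt{\eps}\,\vr) &= (1\pm\sqrt{\eps}\,\vr)\clp\cone{X}_i .
\end{align*}
This is exactly the rule $e_i\clp\phpm(\Psi) = \ph_\mp(e_i\clp\Psi)$ derivable from \eqref{eq:spindimplus1}; applied here it gives $\cone{X}_i\clp\cone{(X_i\intp\Phi)}_\pm = \cone{(X_i\clp X_i\intp\Phi)}_\mp$, with a sign flip on the embedding. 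Summing over $i$ with the signs $\eps_i$ yields
\begin{align*}
\conedcli\cone{\Phi}_\pm &= \cone{(\dcli\Phi)}_\mp .
\end{align*}

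The proof is completed by the universal identity $\cone{\Psi}_\mp = \pm\sqrt{\eps}\,\vr\clp\cone{\Psi}_\pm$, which I verify in one line from $\vr^2 = -\eps$: indeed $\pm\sqrt{\eps}\,\vr\clp(1\mp\sqrt{\eps}\,\vr) = \pm\sqrt{\eps}\,\vr + 1 = 1\pm\sqrt{\eps}\,\vr$. Applied to $\dcli\Phi$ this converts $\cone{(\dcli\Phi)}_\mp$ into $\pm\sqrt{\eps}\,\vr\clp\cone{(\dcli\Phi)}_\pm$, which is the required formula. The only real subtlety is keeping the two sign flips straight—the one incurred by pushing a tangential Clifford factor through $(1\mp\sqrt{\eps}\,\vr)$, and the one relating $\cone{\cdot}_\mp$ to $\pm\sqrt{\eps}\,\vr\clp\cone{\cdot}_\pm$—but both ultimately trace back to the same anticommutation $\cone{X}_i\vr = -\vr\cone{X}_i$ together with $\vr^2 = -\eps$.
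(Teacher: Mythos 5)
Your proof is correct and takes essentially the same route as the paper's: both arguments note that the radial contraction vanishes ($\vr \intp \pb\prj_1 \Phi = 0$), identify the tangential part of $\conedcli$ on the lift with the lift of $\dcli \Phi$, and then deal with the Clifford factor $(1 \mp \sqrt{\eps}\, \vr)$ via anticommutation together with $(\vr \clp)^2 = -\eps$. The only cosmetic difference is that you anticommute frame-wise and record the intermediate identity $\conedcli \cone{\Phi}_\pm = \cone{(\dcli \Phi)}_\mp$ before converting back with $\vr \clp$, whereas the paper pushes $\conedcli$ through $\vr \clp$ all at once using the relations \eqref{eq:clf}.
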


\begin{proof}
We can relate the orthogonal duals of the Clifford multiplication 1-forms
on $\M$ and $\cone{\M}$, respectively, by
\begin{align*}
  \conedcli \pb\prj_1 \Phi
    &= \pb\prj_1 (\dcli \Phi)
      + \eps \vr \clp (\vr \intp \pb\prj_1 \Phi)
    = \pb\prj_1 (\dcli \Phi) .
\end{align*}
Now we substitute \eqref{eq:conesf} and compute using \eqref{eq:clf}
and the fact $(\vr \clp)^2 = -\eps$,
\begin{align*}
\begin{split}
  \conedcli \cone{\Phi}_\pm
    & = r^p\, \conedcli (1 \mp \sqrt{\eps}\, \vr)
          \clp \pb\prj_1 \Phi
    = {}
\\    
    & = r^p ((1 \pm \sqrt{\eps}\, \vr) 
          \clp (\conedcli \pb\prj_1 \Phi)
        \pm 2 \vr \intp \pb\prj_1 \Phi)
    = {}
\\    
    & = r^p (1 \pm \sqrt{\eps}\, \vr) 
          \clp (\conedcli \pb\prj_1 \Phi)
    = {}
\\
    & = \pm \sqrt{\eps}\, r^p\, \vr \clp (1 \mp \sqrt{\eps}\, \vr)
          \clp (\conedcli \pb\prj_1 \Phi)
    = {}
\\
    & = \pm \sqrt{\eps}\, r^p\, \vr \clp (1 \mp \sqrt{\eps}\, \vr)
          \clp \pb\prj_1 (\dcli \Phi)
    = {}
\\
    & = \pm \sqrt{\eps}\, \vr \clp \cone{(\dcli \Phi)}_\pm ,
\end{split}
\end{align*}
proving the claim.
\end{proof}

\begin{prop}
Let\/ $\Phi$ be a primitive Killing spinor-valued $p$-form on $\M$ with
Killing number $a = \pm \tfrac 12 \sqrt{\eps}$.  Then the
spinor-valued differential $(p+1)$-form $\Xi_\pm$ on the cone
$\cone{\M}$ constructed in \eqref{eq:coneksf} is primitive as well.
\end{prop}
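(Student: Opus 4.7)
The plan is to show $\conedcli \Xi_\pm = 0$ directly, since by \eqref{eq:primsf} this is exactly what primitivity of $\Xi_\pm$ means on the cone. I would apply $\conedcli$ to each of the three summands in \eqref{eq:coneksf} separately and verify that their three contributions add to zero.

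For the leading $\dr \extp \cone{\Phi}_\pm$ piece I would use the Leibniz-type identity in \eqref{eq:clf}, namely $\conedcli(\dual{Y} \extp \Psi) + \dual{Y} \extp \conedcli \Psi = Y \clp \Psi$, applied on the cone with $Y = \vr$. Because $\cone{g}(\vr, \vr) = \eps$, one has $\dual{\vr} = \eps\, \dr$; combining this with the preceding lemma and the primitivity of $\Phi$, which together give $\conedcli \cone{\Phi}_\pm = \pm \sqrt{\eps}\, \vr \clp \cone{(\dcli \Phi)}_\pm = 0$, the first summand contributes $\conedcli(\dr \extp \cone{\Phi}_\pm) = \eps\, \vr \clp \cone{\Phi}_\pm$. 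The other two summands I would handle by the preceding lemma directly, reducing the task to computing $\dcli(\cle \Phi)$ and $\dcli(\dif \Phi)$ downstairs on $\M$. For primitive $\Phi$ the $\sla(2)$-relation \eqref{eq:clfcommut} yields $\dcli(\cle \Phi) = -(n-2p)\, \Phi$, and Lemma \ref{lem:primksf} combined with the hypothesis $a = \pm \tfrac{1}{2}\sqrt{\eps}$ yields $\dcli(\dif \Phi) = -a(n+2)\, \Phi = \mp \tfrac{n+2}{2}\sqrt{\eps}\, \Phi$. Substituting, using $\sqrt{\eps} \cdot \sqrt{\eps} = \eps$, and matching the $\pm$-choices, the contributions of the $\cone{\cle \Phi}_\pm$ and $\cone{\dif \Phi}_\pm$ summands to $\conedcli \Xi_\pm$ become $\tfrac{\eps(n-2p)}{2(p+1)}\, \vr \clp \cone{\Phi}_\pm$ and $-\tfrac{\eps(n+2)}{2(p+1)}\, \vr \clp \cone{\Phi}_\pm$ respectively.

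Summing all three pieces produces $\eps\, \vr \clp \cone{\Phi}_\pm$ times the scalar $1 + \tfrac{(n-2p) - (n+2)}{2(p+1)} = 1 + \tfrac{-2(p+1)}{2(p+1)} = 0$, so $\conedcli \Xi_\pm = 0$ and $\Xi_\pm$ is primitive. The hard part will really just be the sign bookkeeping: the $\pm$-choices must be kept consistent between $\Xi_\pm$ and the preceding lemma, there are several uses of $\sqrt{\eps} \cdot \sqrt{\eps} = \eps$ to track, and one needs the correct sign in $\dcli(\cle \Phi) = -(n-2p)\Phi$ on the primitive subspace. That the three coefficients balance exactly to zero is precisely the algebraic reason the cone construction demands the specific Killing number $a = \pm \tfrac{1}{2}\sqrt{\eps}$.
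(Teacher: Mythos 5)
Your proposal is correct and follows essentially the same route as the paper: apply $\conedcli$ term by term to \eqref{eq:coneksf}, use the Leibniz relation from \eqref{eq:clf} together with $\dual{\vr}=\eps\,\dr$ for the $\dr\extp\cone{\Phi}_\pm$ term, push $\conedcli$ down to $\dcli$ via \eqref{eq:conedcli}, and evaluate $\dcli(\cle\Phi)$ and $\dcli\,\dif\Phi$ by the commutator relation and Lemma \ref{lem:primksf}, after which the coefficients $1+\tfrac{(n-2p)-(n+2)}{2(p+1)}$ cancel exactly as in the paper. Your sign $\dcli(\cle\Phi)=-(n-2p)\Phi$ on primitive forms is the one the paper actually uses in its computations (a literal reading of \eqref{eq:clfcommut} with $X=\cle$, $Y=-\dcli$ would suggest the opposite sign, so you were right to flag this as the delicate point), and all $\pm$ and $\sqrt{\eps}$ bookkeeping in your sketch checks out.
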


\begin{proof}
By \eqref{eq:clf}, \eqref{eq:clfcommut},
\eqref{eq:primksf} and \eqref{eq:conedcli}, we calculate:
\begin{align*}
\begin{split}
  & \conedcli \Xi_\pm
    = \conedcli \Big(
        \dr \extp \cone{\Phi}_\pm
        \mp \tfrac{1}{2(p+1)}\, \sqrt{\eps}\, \cone{\cle\Phi}_\pm
        + \tfrac{1}{p+1}\, \cone{\dif\Phi}_\pm
      \Big)
    = {}
\\
    & = \eps \vr \clp \cone{\Phi}_\pm
      \mp \sqrt{\eps}\, \dr \extp \vr \clp \cone{(\dcli\Phi)}_\pm
      - {}
\\ & \quad
      - \tfrac{1}{2(p+1)}\, \eps \vr \clp \cone{(\dcli (\cle\Phi))}_\pm
      \pm \tfrac{1}{p+1}\, \sqrt{\eps}\, \vr
        \clp \cone{(\dcli \dif\Phi)}_\pm
    = {}
\\
    & = \eps \vr \clp \cone{\Phi}_\pm
      + \tfrac{n-2p}{2(p+1)}\, \eps \vr \clp \cone{\Phi}_\pm
      - {}
\\ & \quad
      - \tfrac{1}{2(p+1)}\, \eps \vr \clp \cone{(\cle (\dcli\Phi)}_\pm
      - \tfrac{n+2}{2(p+1)}\, \eps \vr \clp \cone{\Phi}_\pm
    = 0 .
\end{split}
\end{align*}
The proof is complete.
\end{proof}

\hspace{1cm}

{\bf Acknowledgment:}
The authors gratefully acknowledge the support of the grant GA CR
P201/12/G028 and SVV-2016-260336.

\vspace{0.3cm}

Petr Zima, Petr Somberg

Mathematical Institute of Charles University,

Sokolovská 83, Praha 8 - Karlín, Czech Republic, 

E-mail: zima@karlin.mff.cuni.cz, somberg@karlin.mff.cuni.cz.


\begin{thebibliography}{DNP86}
\small

\bibitem[Bau91]{baum1991}
\newblock
  Baum, H., Friedrich, T., Grunewald, R. and Kath, I.
\newblock
  \textit{Twistors and Killing spinors on Riemannian manifolds}.
  Teubner-Texte zur Mathematik [Teubner Texts in Mathematics].
\newblock
  B. G. Teubner Verlagsgesellschaft (1991).

\bibitem[Bär93]{bar1993}
\newblock
  Bär, C.
\newblock
  Real Killing spinors and holonomy.
\newblock
  \textit{Communications in Mathematical Physics}
  \textbf{154} (1993), 509--521. 

\bibitem[Ber55]{berger1955}
\newblock
  Berger, M.
\newblock
  Sur les groupes d'holonomie homogènes de variétés à connexion affine
  et des variétés riemanniennes.
\newblock
  \textit{Bulletin de la Société Mathématique de France}
  \textbf{83} (1955), 279-330.

\bibitem[Boh03]{bohle2003}
\newblock
  Bohle, C.
\newblock
  Killing spinors on Lorentzian manifolds.
\newblock
  \textit{Journal of Geometry and Physics}
  \textbf{45}(3-4) (2003), 285--308.

\bibitem[DP83]{duff1983}
\newblock
  Duff, M. J. and Pope, C. N.
\newblock
  Kaluza-Klein Supergravity and the Seven Sphere.
\newblock
  In \textit{Supersymmetry and Supergravity '82},
  Proceedings of the Trieste September 1982 School.
\newblock
  World Scientific Press (1983).

\bibitem[DNP86]{duff1986}
\newblock
  Duff, M. J., Nilsson, B. E. W. and Pope, C. N.
\newblock
  Kaluza-Klein supergravity.
\newblock
  \textit{Physics Reports. A Review Section of Physics Letters}
  \textbf{130}(1-2) (1986), 1--142.

\bibitem[Fri80]{friedrich1980}
\newblock
  Friedrich, T.
\newblock
  Der erste Eigenwert des Dirac-Operators einer kompakten,
  Riemannschen Mannigfaltigkeit nichtnegativer Skalarkrümmung.
\newblock
  \textit{Mathematische Nachrichten}
  \textbf{97}(1) (1980), 117--146.

\bibitem[Fri90]{friedrich1990}
\newblock
  Friedrich, T.
\newblock
  On the conformal relation between twistors and Killing spinors.
\newblock
  In Proceedings of the Winter School "Geometry and Physics",
  \textit{Circolo Matematico di Palermo}
  \textbf{22}(2) (1990), 59--75.

\bibitem[Sem03]{semmelmann2003}
\newblock
  Semmelmann, U.
\newblock
  Conformal Killing forms on Riemannian manifolds.
\newblock
  \textit{Mathematische Zeitschrift}
  \textbf{245} (2003), 503--527.

\bibitem[Sim62]{simons1962}
\newblock
  Simons, J.
\newblock
  On the transitivity of holonomy systems.
\newblock
  \textit{Annals of Mathematics}
  \textbf{76}(2) (1962), 213--234.

\bibitem[Slu96]{slupinski1996}
\newblock
  Slupinski, M. J.
\newblock
  A Hodge type decomposition for spinor valued forms.
\newblock
  \textit{Annales scientifiques de l'École Normale Supérieure} 
  \textbf{29} (1996), 23--48.

\bibitem[Som11]{somberg2011}
\newblock
  Somberg, P. 
\newblock
  Killing tensor spinor forms and their application in Riemannian
  geometry.
\newblock  
  \textit{Hypercomplex analysis and applications},
  Trends in Mathematics, Birkhauser (2011), 233--247.

\bibitem[SW68]{stein1968}
\newblock
  Stein, E. M. and Weiss, G.
\newblock
  Generalization of the Cauchy-Riemann equations and representations
  of the rotation group.
\newblock
  \textit{American Journal of Mathematics}
  \textbf{90} (1968), 163--196.

\bibitem[TY70]{tachibana1970}
\newblock
  Tachibana, S.-i. and Yu, W. N.
\newblock
  On a Riemannian space admitting more than one Sasakian structures.
\newblock
  \textit{Tohoku Mathematical Journal}
  \textbf{22}(4) (1970), 536--540. 

\bibitem[WP70]{walker1970}
\newblock
  Walker, M. and Penrose, R.
\newblock
  On quadratic first integrals of the geodesic equations for type
  {$\{22\}$} spacetimes.
\newblock
  \textit{Communications in Mathematical Physics}
  \textbf{18} (1970), 265--274.

\bibitem[Yan52]{yano1952}
\newblock
  Yano, K.
\newblock
  Some remarks on tensor fields and curvature.
\newblock
  \textit{Annals of Mathematics. Second Series}
  \textbf{55} (1952), 328--347.

\bibitem[Zim14]{zima2014}
\newblock
  Zima, P.
\newblock
  \textit{(Conformal) Killing spinor valued forms on Riemannian
  manifolds}.
  Thesis, Charles University in Prague (2014).
\newblock
  Available online at:
  \texttt{https://is.cuni.cz/webapps/zzp/detail/121806} .

\end{thebibliography}
\end{document}